\numberwithin{equation}{section}
\title%[]
{Protected nodes and fringe subtrees in some random trees}
\date{2 October, 2013}
\newcommand\urladdrx[1]{{\urladdr{\def~{{\tiny$\sim$}}#1}}}
\author{Luc Devroye}
\address{School of Computer Science, McGill University, 3480 University Street, 
Montr\'eal, Qu\'ebec, H3A 2A7, Canada}
\email{luc@cs.mcgill.ca}
\author{Svante Janson}
\thanks{SJ partly supported by the Knut and Alice Wallenberg Foundation}
\address{Department of Mathematics, Uppsala University, PO Box 480,
SE-751~06 Uppsala, Sweden}
\email{svante.janson@math.uu.se}
\renewcommand\le{\leqslant}
\renewcommand\ge{\geqslant}
\newtheorem{theorem}{Theorem}[section]
\theoremstyle{definition}
\newtheorem{example}[theorem]{Example}
\newtheorem{remark}[theorem]{Remark}
\newtheorem*{ack}{Acknowledgement}
\theoremstyle{remark}
\newenvironment{romenumerate}[1][0pt]{% optional argument changes indentation
\addtolength{\leftmargini}{#1}\begin{enumerate}% gives (i), (ii) etc.
 }{\end{enumerate}}
\newcounter{oldenumi}
\newcounter{thmenumerate}
\newcounter{romxenumerate}   %less indented than standard.
\newcounter{xenumerate}   %no left indentation; thus wider lines
\newcommand{\refT}[1]{Theorem~\ref{#1}}
\newcommand{\refS}[1]{Section~\ref{#1}}
\newcommand{\refE}[1]{Example~\ref{#1}}
\newcommand\REM[1]{{\raggedright\texttt{[#1]}\par\marginal{XXX}}}
\xdef\klockan{\the\count1.0\the\count255}
\xdef\klockan{\the\count1.\the\count255}\fi
\newcommand{\sumk}{\sum_{k=0}^\infty}
\newcommand{\sumki}{\sum_{k=1}^\infty}
\newcommand\set[1]{\ensuremath{\{#1\}}}
\newcommand\xpar[1]{(#1)}
\newcommand\bigpar[1]{\bigl(#1\bigr)}
\newcommand\Bigpar[1]{\Bigl(#1\Bigr)}
\newcommand\lrpar[1]{\left(#1\right)}
\def\rompar(#1){\textup(#1\textup)}    % usage: \rompar(...)
\newcommand\xfrac[2]{#1/#2}
\newcommand\parfrac[2]{\lrpar{\frac{#1}{#2}}}
\newcommand\Bigparfrac[2]{\Bigpar{\frac{#1}{#2}}}
\def\xexp(#1){e^{#1}}
\newcommand\ntoo{\ensuremath{{n\to\infty}}}
\newcommand\punkt[1]{\if.#1\else.\spacefactor1000\fi{#1}}
\newcommand\iid{i.i.d\punkt}    
\newcommand\ie{i.e\punkt}
\newcommand\eg{e.g\punkt}
\newcommand{\as}{a.s\punkt}
\newcommand{\tend}{\longrightarrow}
\newcommand\dto{\overset{\mathrm{d}}{\tend}}
\newcommand\pto{\overset{\mathrm{p}}{\tend}}
\newcommand\eqd{\overset{\mathrm{d}}{=}}
\newcommand\bbZgeo{\mathbb Z_{\ge0}}
\newcounter{CC}
\newcounter{cc}
\newcommand\E{\operatorname{\mathbb E{}}}
\renewcommand\P{\operatorname{\mathbb P{}}}
\newcommand\Exp{\operatorname{Exp}}
\newcommand\Po{\operatorname{Po}}
\newcommand\Bi{\operatorname{Bi}}
\newcommand\Ge{\operatorname{Ge}}
\newcommand\eps{\varepsilon}
\renewcommand\phi{\xxx}  %% WARNING
\newcommand\cE{\mathcal E}
\newcommand\cL{{\mathcal L}}
\newcommand\cT{{\mathcal T}}
\newcommand\qw{^{-1}}
\newcommand\intoi{\int_0^1}
\newcommand\intot{\int_0^t}
\newcommand\intoo{\int_0^\infty}
\newcommand\dd{\,\mathrm{d}}
\newcommand\dx{d^+}
\newcommand\Tx{T_*}
\newcommand\st{\mathfrak T}
\newcommand\stn{\st_n}
\newcommand\xp{{\mathsf p}}
\newcommand\np{n_\xp}
\newcommand\pp{p_\xp}
\newcommand\ppl{p_{\xp,\ell}}
\newcommand\cep{\cE_\xp}
\newcommand\px{p_*}
\newcommand\pxl{p_{*,\ell}}
\newcommand\pxli{p_{*,\ell-1}}
\newcommand\pxx[1]{p_{*,#1}}
\newcommand\Phit{\Phi_\tau}
\newcommand\sgrt{simply generated random tree}
\newcommand\ws{weight sequence}
\newcommand\wwk{(w_k)_{k=0}^\infty}
\newcommand\ppik{\ensuremath{(\pi_k)_{k=0}^\infty}}
\newcommand\GW{Galton--Watson}
\newcommand\GWt{\GW{} tree}
\newcommand\cGWt{conditioned \GW{} tree}
\newcommand\bbNo{\bbZgeo}
\newcommand\ctnx{\cT_{n,*}}
\newcommand\lprot{$\ell$-protected}
\newcommand\ctt{\tilde T}
\newcommand\fT{\hat T}
\newcommand\ctn{\cT_n}
\begin{document}

%\keywords{<keywords>}
\subjclass[2010]{60C05; 05C05}
%05 Combinatorics 
%05C Graph theory [For applications of graphs, see 68R10, 90C35, 94C15]
%05C05 Trees
%60 Probability theory and stochastic processes
%60C Combinatorial probability
%60C05 Combinatorial probability

\begin{abstract} 
We study protected nodes in various classes of random
rooted trees
by putting them in the general context of fringe subtrees
introduced by Aldous (1991). %\cite{AldousFringe}.
Several types of random trees are considered:
simply generated trees (or conditioned Galton--Watson trees),
which includes several cases treated separately by other authors,
binary search trees and random recursive trees.
This gives unified and simple proofs of several earlier results, as well as
new results.
\end{abstract}

\maketitle

\section{Introduction}\label{S:intro}

Several recent papers study protected nodes in various classes of random
rooted trees, where a node is said to be 
\emph{protected} if it is not a leaf and, furthermore, none of its children
is a leaf.
(Equivalently, a node is protected if and only if the distance to any
descendant that is a leaf is at least 2; for generalizations, see
\refS{Sell}.) 
See \citet{CheonShapiro} (uniformly random ordered trees, Motzkin trees,
full binary trees,  binary trees, full ternary trees),
\citet{Mansour} ($k$-ary trees),
\citet{DuProdinger} (digital search trees),
\citet{MahmoudWard-bst} (binary search trees),
\citet{MahmoudWard-rrt} (random recursive trees),
\citet{Bona} (binary search trees).

The purpose of the present paper is to extend and sharpen some of these
results by putting them in the general context of \emph{fringe subtrees}
introduced by \citet{AldousFringe}.

If $T$ is any rooted tree, and $v$ is a node in $T$, let $T_v$ be the subtree
rooted at $v$. By taking $v$ uniformly at random from the nodes of $T$, we
obtain a random rooted tree which we call the \emph{random fringe subtree} of
$T$ and denote by $\Tx$.

Note that a node $v$ is protected if and only if the subtree $T_v$ has a
protected root. Hence, if $\cep$ is the set of trees that have a protected
root,
then $v$ is protected in $T$ if and only if $T_v\in\cep$.
In particular, taking $v$ uniformly at random, for any given tree $T$,
\begin{equation}\label{ppt}
 \pp(T):=\P(\text{a uniformly random node $v$ is protected})=\P(\Tx\in\cep).
\end{equation}
and we immediately
obtain results for protected nodes from more general results for fringe
subtrees, see \refS{Spf}.

When $T$ is a random tree, we can think of $\Tx$ in two ways, called
\emph{annealed} and \emph{quenched} using terminology from statistical
physics.
In the annealed version we take a random tree $T$ and a uniformly random
node $v$ in it, yielding a random fringe subtree $\Tx$.

In the quenched version we do the random choices in two steps. First we
choose a random tree $T$. We then fix $T$ and choose $v\in T$ uniformly at
random, yielding a random fringe subtree $\Tx$ depending on $T$. We thus
obtain for every choice of $T$ a  probability distribution $\cL(\Tx)$ 
on the set $\st$ of all rooted trees; 
this distribution depends on the random tree $T$ and is
thus a random probability distribution. 
In other words, we consider the conditional distribution $\cL(\Tx\mid T)$ of
$\Tx$ given $T$.
We can now study properties of this random probability distribution. 
Averaging over $T$, we obtain the
distribution of $\Tx$ in the annealed version, so results in the quenched
version are generally stronger than in the annealed version.

Returning to protected nodes, we see that in the quenched point of view, we
consider $\np(T)$, the number of protected nodes in a tree $T$, and
$\pp(T)=\np(T)/|T|$, the probability that a randomly chosen node in $T$ is
protected, and we regard these functions of $T$
as random variables depending on a random tree $T$. 
Thus \eqref{ppt} can now be written 
\begin{equation}
\pp(T)=\P(\Tx\in\cep\mid T).  
\end{equation}
In the annealed version
we more simply consider the probability that a random node in a random tree
$T$ is protected, which equals the expectation 
\begin{equation}
\E \pp(T)=\P(\Tx\in\cep).  
\end{equation}

The first class of random trees that we consider in this paper are the
\sgrt{s}; these are defined using a \ws{} $\wwk$ which we regard as fixed,
see \refS{Strees} for the definition and the connection to conditioned \GWt{s}.
It is well-known that suitable choices of $\wwk$ yield several important
classes of random trees, 
see \eg{} \citet{AldousII}, \citet{Devroye}, \citet{Drmota}, \citet{SJ264}
and \refS{Sex}.

Let 
\begin{equation}
  \Phi(t):=\sumk w_kt^k
\end{equation}
be the generating function of the \ws, and let $\rho\in[0,\infty]$ be its
radius of convergence. We define an important parameter $\tau\ge0$ by:
\begin{romenumerate}
\item $\tau$ is the unique number in $[0,\rho]$ such that
  \begin{equation}\label{tau}
\tau\Phi'(\tau)=\Phi(\tau)<\infty,	
  \end{equation}
%with   $\Phi(\tau)<\infty$,
if there exists any such $\tau$.
\item 
If \eqref{tau} has no solution, then $\tau:=\rho$. 
\end{romenumerate}
See further \cite[Section 7]{SJ264}, where 
several properties and equivalent
characterizations are given. 
(For example, 
$\tau$ is 
the minimum point in $[0,\rho]$
of $\Phi(t)/t$. 
Furthermore, $\Phi(\tau)<\infty$ also in case (ii), and $\tau>0\iff\rho>0$.)

We define another \ws{} \ppik{} by
\begin{equation}\label{ppik}
  \pi_k:=\frac{w_k\tau^k}{\Phi(\tau)};
\end{equation}
this \ws{} has the generating function
\begin{equation}\label{phit}
  \Phit(t):=\xfrac{\Phi(\tau t)}{\Phi(\tau)}.
\end{equation}
Note that
 $\sumk \pi_k=1$; thus $\ppik$ is a probability distribution on 
the non-negative integers 
$\bbNo:=\set{0,1,2,\dots}$.

\begin{theorem}\label{T1}
Let $\ctn$ be a \sgrt{} with $n$ nodes. Then, with notations as above, 
the following holds as \ntoo.
  \begin{romenumerate}[-10pt]
  \item (Annealed version.)
The probability $p_n=\E \pp(\ctn)$ that a random node in a random tree
$\ctn$ is protected tends to a limit $\px$ as \ntoo, with
\begin{equation}\label{t1}
  \px:=\Phit(1-\pi_0)-\pi_0
=\frac{\Phi\bigpar{\tau-\tau w_0/\Phi(\tau)}-w_0}{\Phi(\tau)}.
\end{equation}

  \item ({Quenched version}.)
The proportion of nodes in $\ctn$ that are protected, 
\ie{} $\pp(\ctn)=\np(\ctn)/n$, converges in probability to $\px$ as \ntoo. 
  \end{romenumerate}
\end{theorem}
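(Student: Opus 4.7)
The plan is to reduce both parts to a general fringe-subtree convergence theorem for \sgrt{s}, combined with a short direct calculation for the limit tree. Using \eqref{ppt}, one has $\pp(\ctn)=\P(\Tx\in\cep\mid\ctn)$ and $p_n=\E\pp(\ctn)=\P(\Tx\in\cep)$, so everything reduces to understanding the distribution of the random fringe subtree $\Tx$ of $\ctn$.

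The first step is to identify the limit. I would invoke the classical Aldous-type fringe-subtree theorem for \sgrt{s} (in the form developed in the references cited in the excerpt): as \ntoo, $\Tx$ converges in distribution, in the local topology on rooted trees, to the unconditional \GWt{} $\cT$ with offspring distribution $\ppik$ from \eqref{ppik}. Since the event $\cep$ (root not a leaf and no child of the root a leaf) depends only on the first two levels of the tree, it is continuous in this topology.

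The second step is a direct computation of $\P(\cT\in\cep)$. In $\cT$ the root has $k$ children with probability $\pi_k$; each child is independently the root of a copy of $\cT$, and a copy is a leaf with probability $\pi_0$. The root is protected iff $k\ge1$ and none of the $k$ children is a leaf, so
\begin{equation*}
\P(\cT\in\cep)=\sum_{k=1}^\infty\pi_k(1-\pi_0)^k=\Phit(1-\pi_0)-\pi_0=\px,
\end{equation*}
where the equivalence with the second form of $\px$ in \eqref{t1} is immediate from \eqref{ppik} and \eqref{phit}. Together with the distributional convergence $\Tx\dto\cT$, this yields $p_n\to\px$, which is part~(i).

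For part~(ii) I would invoke the stronger \emph{quenched} form of the fringe-subtree theorem: for any local event $A$, the random proportion $\P(\Tx\in A\mid\ctn)$ of nodes $v\in\ctn$ with $T_v\in A$ converges in probability to $\P(\cT\in A)$; applied to $A=\cep$ this is precisely $\pp(\ctn)\pto\px$. The hard part will be the second-moment estimate underlying this quenched statement, namely $\Var\np(\ctn)=o(n^2)$. This amounts to controlling the two-point correlation $\P(T_u\in\cep,\,T_v\in\cep)$. Pairs $(u,v)$ with neither an ancestor of the other can be handled by a joint fringe-subtree convergence that delivers asymptotic independence; the real obstacle is the ancestor--descendant pairs, for which $T_v\subset T_u$ and the two indicators are non-trivially coupled. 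Their total contribution is crudely bounded by $O(n\cdot\mathrm{height}(\ctn))$, which is $o(n^2)$ since the height of a \sgrt{} is $\op(n)$ (in fact much smaller) in the regime where $\tau$ is well defined. Modulo the general fringe-subtree machinery the paper presumably sets up in subsequent sections, the entire statement of \refT{T1} therefore reduces to the elementary \GWt{} calculation of the second step.
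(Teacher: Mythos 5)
Your proposal matches the paper's proof: both parts are reduced to the annealed and quenched fringe-subtree convergence statements (the paper's \refT{T2}, quoted from \cite[Theorem 7.12]{SJ264}), followed by exactly the same conditioning-on-the-root-degree computation giving $\P(\cT\in\cep)=\sum_{k\ge1}\pi_k(1-\pi_0)^k=\Phit(1-\pi_0)-\pi_0$. The only divergence is that you sketch a second-moment argument for the quenched convergence itself, whereas the paper simply cites that result as known; your sketch is not needed for the argument and is the only place where your details are incomplete.
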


The main idea of this paper, viz.\ to study protected nodes by studying
fringe subtrees, applies also to other types of random trees.
We consider binary search trees in \refS{SBST} and random recusive trees in
\refS{SRRT}.

Protected nodes have been studied also for
digital search trees \cite{DuProdinger} and
tries \cite{Gaither+}. 
As far as we know, the fringe subtrees of these random trees have not been
studied in general; this will be dealt with elsewhere.

\begin{ack}
  This research was mainly done during the 
23rd International Meeting on Probabilistic, Combinatorial and Asymptotic
Methods for the Analysis of Algorithms (AofA 2012) in Montreal, June 2012.
We thank the organizers for providing this opportunity and several
participants for helpful comments.
\end{ack}

\section{Simply generated trees and \GWt{s}}\label{Strees}

All trees in this paper are \emph{rooted} and \emph{ordered} (= \emph{plane}).
(For unordered trees, see \refE{Ecayley}.) We denote the outdegree of a
node $v\in T$ by $\dx(v)$. 
Note that a tree is uniquely determined by its
sequence of outdegrees, taken in \eg{} breadth-first order. 
%(We thus ignore
%labels, if there are any, and identify trees that are isomorphic in the
%obvious order preserving way.)
See further \eg{} \cite{Drmota} and \cite{SJ264}.
We let $\st$ denote the set of all ordered rooted trees,
and $\stn:=\set{T\in\st:|T|=n}$ the set of all 
ordered rooted trees with  with $n$ nodes.
By a \emph{random tree} we mean a random element of $\st$ with some given but
arbitrary distribution. (No uniformity is implied unless we say so.)

Given a \ws{} $\wwk$, we define the weight of a tree $T$ to be 
$w(T):=\prod_{v\in T} w_{\dx(v)}$.  
For $n\ge1$, we define the \sgrt{} $\ctn$ as the random
tree obtained by selecting an ordered rooted tree in $\stn$ with probability
proportional to its weight. (We consider only $n$ such that there is at
least one tree in $\stn$ with positive weight.)

It is well-known that \sgrt{s} are essentially the same as \cGWt{s}.
Given  a probability distribution $\ppik$ on $\bbNo$, let $\cT$ be
the corresponding \emph{\GWt}; 
this is a random tree where each node has a random
number of children, and these numbers all are independent and with the
distribution $\ppik$. Furthermore, let $\ctn$ be $\cT$ conditioned on having
exactly $n$ nodes; this is called a \emph{\cGWt}.
(We consider only $n$ such that $\P(|\cT|=n)>0$.)
It is easy to see that the \cGWt{} $\ctn$ coincides with the \sgrt{} defined
using the \ws{} \ppik.
Moreover, if $\wwk$ is any \ws{} with radius of convergence $\rho>0$ (this
is satisfied in virtually all applications), let \ppik{} be given by
\eqref{ppik}. Then the \sgrt{} defined by $\wwk$ coincides with the \sgrt{}
defined by $\ppik$, and thus with the \cGWt{} defined by $\ppik$,
see \eg{} \cite{Kennedy} and \cite{SJ264}.
(There are also other probability distributions yielding the
same \cGWt, but the choice in \eqref{ppik} is the canonical one, 
see \cite{SJ264}.) 

It is easy to see that the probability distribution $\ppik$
has expectation
$\tau\Phi'(\tau)/\Phi(\tau)$, which equals 1 in case (i) above
(\ie, when \eqref{tau} holds),
but is less
than 1 in case (ii) (\ie, when \eqref{tau} has no solution).
Thus,
$\ppik$ yields a
critical \GWt{} $\cT$ in case (i), but 
$\cT$ is subcritical in case (ii). In both cases, $\cT$ is \as{} finite.

\section{Proof of \refT{T1}}\label{Spf}

The proof is based on the fact that the random fringe subtrees of a
\cGWt{} converge in distribution to the corresponding 
(unconditional) \GWt, as stated in the
following theorem. Part (i) was proved by \citet{AldousFringe} 
under some extra conditions,
and by \citet{BenniesK} under fewer extra conditions; 
the general case and (ii)
are proved in \cite[Theorem 7.12]{SJ264}.

\begin{theorem}\label{T2}
Let $\ctn$ be a \sgrt{} with $n$ nodes. Then, with notations as above, 
the following holds as \ntoo.
  \begin{romenumerate} [-10pt]
  \item ({Annealed version}.)
The fringe subtree $\ctnx$ converges in distribution to the \GWt{} $\cT$.
I.e., for every fixed tree $T$, %$\P(\ctnx=T)\to\P(\cT=T)$.
\begin{equation}\label{t2a}
  \P(\ctnx=T) \to \P(\cT=T).
\end{equation}
  \item ({Quenched version}.)
The conditional distributions $\cL(\ctnx\mid\ctn)$ converge to the
distribution of $\cT$ in probability.
I.e., for every fixed tree $T$, 
\begin{equation}\label{t2q}
  \P(\ctnx=T\mid\ctn) \pto \P(\cT=T).
\end{equation}
\end{romenumerate} 
\qed
\end{theorem}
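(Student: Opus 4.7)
The plan is to control the integer $N_T(\ctn):=\#\{v\in\ctn:(\ctn)_v=T\}$, since $\P(\ctnx=T\mid\ctn)=N_T(\ctn)/n$. Part~(i) then amounts to $\E N_T(\ctn)/n\to\P(\cT=T)$ and part~(ii), by Chebyshev's inequality, will follow from this together with $\E N_T(\ctn)^2/n^2\to\P(\cT=T)^2$. The workhorse for both moments is a \emph{snip-and-replace} bijection. Writing $k:=|T|$, a pair $(S,v)$ with $S\in\stn$ and $S_v=T$ corresponds bijectively to a pair $(S',v)$ with $S'\in\st_{n-k+1}$ and $v$ a leaf of $S'$, by cutting off the subtree rooted at $v$ and letting $v$ become a leaf. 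Since the weight $\prod_{w\in S}\pi_{\dx(w)}$ equals $\pi_0^{-1}\,\P(\cT=T)\,\prod_{w\in S'}\pi_{\dx(w)}$, summing over all such pairs yields
\begin{equation*}
\E N_T(\ctn)=\frac{\P(\cT=T)}{\pi_0}\cdot\frac{\P(|\cT|=n-k+1)}{\P(|\cT|=n)}\cdot\E L(\cT_{n-k+1}),
\end{equation*}
where $L(\cdot)$ denotes the number of leaves and $\cT_m$ is $\cT$ conditioned on $|\cT|=m$.

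To conclude part~(i), I would invoke two classical inputs: a ratio limit theorem giving $\P(|\cT|=n-k+1)/\P(|\cT|=n)\to1$ as $n\to\infty$ through admissible sizes (this is the Otter--Meir--Moon estimate $\P(|\cT|=n)\sim cn^{-3/2}$ in the critical case~(i) of the parameter $\tau$, and is handled in the subcritical case~(ii) by \cite[Section~7]{SJ264}), together with the law of large numbers $L(\cT_m)/m\pto\pi_0$ for the leaf count of a conditioned \GWt. Since $L(\cT_m)/m\le1$, bounded convergence upgrades the latter to $\E L(\cT_{n-k+1})/n\to\pi_0$, and substituting produces $\E N_T(\ctn)/n\to\P(\cT=T)$, i.e.,~\eqref{t2a}.

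For the quenched part~(ii), I would rerun the surgery on two marked nodes simultaneously. Writing
\begin{equation*}
N_T(\ctn)^2=N_T(\ctn)+\#\{\text{ordered nested pairs }(v_1,v_2)\}+\#\{\text{disjoint pairs}\},
\end{equation*}
the first two summands contribute $O(n)$: $N_T(\ctn)\le n$ trivially, and in any nested pair one of $v_1,v_2$ is a strict descendant of the other and hence lies in a copy of $T$ of size $k$ rooted at the other, giving at most $2(k-1)N_T(\ctn)$ such ordered pairs. For disjoint $v_1,v_2$, snipping off both copies of $T$ produces a tree in $\st_{n-2k+2}$ with two distinct marked leaves, and the analogous bijective count gives
\begin{equation*}
\E[\#\text{disjoint pairs}]=\frac{\P(\cT=T)^2}{\pi_0^2}\cdot\frac{\E\bigsqpar{L(\cT)(L(\cT)-1)\ett{|\cT|=n-2k+2}}}{\P(|\cT|=n)}.
\end{equation*}
The same ratio limit together with the bounded-convergence upgrade of the leaf LLN to $\E L(\cT_m)^2/m^2\to\pi_0^2$ show this equals $(1+o(1))n^2\,\P(\cT=T)^2$, so $\Var(N_T(\ctn)/n)\to0$ and~\eqref{t2q} follows.

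The main obstacle is the pair of asymptotic inputs invoked in each step: the ratio/local limit theorem for $\P(|\cT|=n)$ and the leaf LLN $L(\cT_m)/m\pto\pi_0$. Both are delicate in the subcritical regime (case~(ii) of $\tau$), where the offspring mean is strictly less than $1$ and the standard square-root asymptotics of the associated random walk must be replaced by the sharper analysis of \cite{SJ264}; once they are granted, the remainder is the combinatorial surgery and bookkeeping sketched above.
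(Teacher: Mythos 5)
The paper does not actually prove this theorem: it is stated with a \qed and attributed to \citet{AldousFringe}, \citet{BenniesK}, and \cite[Theorem 7.12]{SJ264}, so any honest proof attempt is ``a different route'' by default. Your route is the classical Aldous-style one, and the combinatorial core is correct: the snip-and-replace bijection between pairs $(S,v)$ with $S\in\stn$, $S_v=T$ and pairs $(S',v)$ with $S'\in\st_{n-k+1}$, $v$ a leaf, together with the weight identity $w(S)=\pi_0^{-1}\P(\cT=T)\,w(S')$, gives exactly your first-moment formula; the decomposition of $N_T^2$ into diagonal, nested and disjoint pairs with the nested contribution bounded by $2(k-1)N_T=O(n)$ is also right, and Chebyshev then yields the quenched statement. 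This is a clean, self-contained argument in the critical, finite-variance case, which covers every example in the paper.

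Two caveats, both of which you partly flag but which deserve emphasis. First, the leaf law of large numbers $L(\cT_m)/m\pto\pi_0$ that you invoke as a ``classical input'' \emph{is} the case $T=\bullet$ of the very theorem you are proving (a node is a leaf iff its fringe subtree is a single node, and $\P(\cT=\bullet)=\pi_0$); it must therefore be established first by independent means (it is the statement about the empirical outdegree distribution, \cite[Theorem 7.11]{SJ264}), or your argument is circular. Second, and more seriously for full generality, the ratio limit theorem $\P(|\cT|=n-j)/\P(|\cT|=n)\to1$ is not merely ``delicate'' in the subcritical case (ii): the asymptotics of $\P(|\cT|=n)$ can be genuinely irregular there (see \cite[Section 19]{SJ264}), and no such ratio theorem is available without further hypotheses on the weights. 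This is precisely why the general proof in \cite[Theorem 7.12]{SJ264} does not proceed by computing first and second moments against these ratios. So your proposal is a correct proof under the extra regularity assumptions of Aldous's original argument, but it does not recover the theorem in the generality in which the paper uses it; to be honest about scope you should either restrict to the critical case with a local limit theorem, or replace the two black boxes by the conditioned-random-walk argument of \cite{SJ264}.
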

Note that the set of (finite) ordered trees is a
countable discrete set; this justifies that it is enough to consider point
probabilities in \eqref{t2a} and \eqref{t2q}.

\begin{proof}[Proof of \refT{T1}]
For the annealed version, it  follows immediately from \eqref{ppt}
and \eqref{t2a}, which can be written
$\ctnx\dto\cT$,
that
\begin{equation}
  p_n=\P(\ctnx\in\cep)\to\P(\cT\in\cep).
\end{equation}

For the quenched version, conditioning on $\ctn$,
we similarly obtain by \eqref{t2q}, 
\begin{equation}
  \pp(\ctn)=\P(\ctnx\in\cep\mid \ctn)\pto\P(\cT\in\cep).
\end{equation}

It remains only to calculate $\P(\cT\in\cep)$. This is easy, by conditioning
on the root degree, $k$ say.
If $k=0$, then the root is a leaf and not protected, and if $k>0$, the root
is protected if and only if each of its $k$ children has at least one child,
which has probability $(1-\pi_0)^k$. Hence, 
\begin{equation}
  \P(\cT\in\cep)
=
\sumki \pi_k(1-\pi_0)^k = \Phit(1-\pi_0)-\pi_0.
\end{equation}
Finally, $\pi_0=w_0/\Phi(\tau)$ by \eqref{ppik}, and
$\Phit(1-\pi_0)=\Phi(\tau-\tau\pi_0)/\Phi(\tau)$ by \eqref{phit}.
\end{proof}

\section{Examples}\label{Sex}

We give several examples of random trees where \refT{T1} applies.
We focus on the calculation of $\px$, since the other conclusions are the
same for all random trees considered here. We omit some steps in the
calculations, see \eg{} \cite[Section 10]{SJ264} for further details.

\begin{example}[ordered trees] \label{EO}
The \ws{} $w_k=1$ yields uniformly random \emph{ordered  trees}. 
In this case, $\Phi(t)=\sumk t^k=1/(1-t)$ and \eqref{tau} has the
  solution $\tau=1/2$, yielding $\pi_k=2^{-k-1}$ (a geometric $\Ge(1/2)$
 distribution) and $\Phit(t)=1/(2-t)$.
Thus $\pi_0=1/2$ and, by \eqref{t1},
\begin{equation}
  \px=\Phit\Bigpar{\frac12}-\frac12=\frac{1}{2-\frac12}-\frac12=\frac16.
\end{equation}
We thus recover from the annealed version in \refT{T1}
the result by \citet{CheonShapiro} that the average
proportion of protected nodes in a random ordered tree converges to 1/6 as
the size goes to infinity.
Moreover, the quenched version shows that holds also for most individual trees.
More precisely, $\pp(\ctn)\pto1/6$, \ie, for any $\eps>0$, the probability
that a uniformly random ordered tree with $n$ nodes has between $(1/6-\eps)n$
and $(1/6+\eps)n$ protected nodes tends to 1 as \ntoo.
\end{example}

\begin{example}[unordered trees]  \label{Ecayley} \label{EU}
We have assumed that the trees are ordered, but we can treat also unordered
labelled trees by giving the children of each node a (uniform) random
ordering.
As is well known, a uniformly random 
\emph{unordered labelled rooted tree} (sometimes called
\emph{Cayley tree}) then becomes simply generated with weights $w_k=1/k!$.
In this case, $\Phi(t)=\sumk t^k/k!=e^t$ and \eqref{tau} has the
  solution $\tau=1$, yielding $\pi_k=e\qw/k!$ (a Poisson $\Po(1)$
 distribution) and $\Phit(t)=e^{t-1}$.
Thus $\pi_0=e\qw$ and, by \eqref{t1},
\begin{equation}
  \px=\Phit\bigpar{1-e\qw}-e\qw=e^{-e\qw}-e\qw\approx0.32432.
\end{equation}
\end{example}

\begin{example}[full $d$-ary trees] 
Uniformly random \emph{full $d$-ary trees}
are \sgrt{s} with $w_k=1$ if $k=0$
or $k=d$, and $w_k=0$ otherwise. 
(Here $d\ge2$ is a fixed integer.
In this case, the number of nodes $n$ has to be $1\pmod d$.)
We have %In this case,  
$\Phi(t)=1+t^d$ and  $\tau=(d-1)^{-1/d}$, yielding $\pi_0=(d-1)/d$, $\pi_d=1/d$,
and $\Phit(t)=(d-1+t^d)/d$. Consequently, \eqref{t1} yields
\begin{equation}
  \px=\pi_d(1-\pi_0)^d=1/d^{d+1}.
\end{equation}
Thus, \refT{T1} shows that the proportion of 
protected nodes tends to $1/d^{d+1}$.

This was found by \citet{Mansour} (for the annealed version);
note that \cite{Mansour} states  the result  in terms of the
number of internal nodes. Since a full $d$-ary tree with $m$ internal
nodes has $dm+1$ nodes, the proportion of internal nodes that are protected
tends to $1/d^{d}$.

The special case $d=2$ yields full binary trees, for which we find
$\px=1/8$. 
(The proportion $1/4$ given in \cite{CheonShapiro} seems to be a mistake.) 

The special case $d=3$ yields full ternary trees, for which we find
$\px=1/81$, in accordance with \cite{CheonShapiro}.
\end{example}

\begin{example}[$d$-ary trees]
Uniformly random  \emph{$d$-ary trees}
are \sgrt{s} with $w_k=\binom dk$.
(Again, $d\ge2$ is a fixed integer.)
In this case,  
$\Phi(t)=(1+t)^d$ and  $\tau=1/(d-1)$, yielding 
$\pi_k=\binom dk(\frac1d)^k(\frac{d-1}d)^{d-k}$
(a binomial $\Bi(d,1/d)$ distribution) and $\Phit(t)=\xpar{(d-1+t)/d}^d$.
Consequently, $\pi_0=(1-1/d)^d$ and
\begin{equation}
\px
%=\lrpar{\lrpar{\frac{d}{d-1}-\frac1{d-1}\parfrac{d-1}d^d}-1}\parfrac{d-1}d}^d}}
=\parfrac{d-\pi_0}{d}^d-\pi_0^d
=\lrpar{1-\frac{(d-1)^d}{d^{d+1}}}^d-\frac{(d-1)^d}{d^d}.
\end{equation}
In particular, for $d=2$ (binary trees), we obtain $\px=33/64$.
(The proportion $9/256$ given in \cite{CheonShapiro} seems to be a mistake.) 
\end{example}

\begin{example}[Motzkin trees]
A \emph{Motzkin tree} has each outdegree 0,1 or 2. Taking $w_0=w_1=w_2=1$
and $w_k=0$ for $k\ge3$ yields a uniformly random Motzkin tree. We have
$\Phi(t)=1+t+t^2$ and \eqref{tau} has the soultion $\tau=1$, yielding
$\pi_k=1/3$, $0\le k\le2$, and $\Phit(t)=(1+t+t^2)/3$.
Thus, by \eqref{t1}, 
\begin{equation}
  \px=\frac13\lrpar{\frac23+\Bigparfrac23^2}=\frac{10}{27}.
\end{equation}
Hence, the proportion of protected nodes in a uniformly random Motzkin tree
tends to 10/27, as shown (in the annealed version) by \citet{CheonShapiro}.
\end{example}

\section{$\ell$-protected nodes}\label{Sell}

More generally, given an integer $\ell\ge1$, we say that a node in a rooted
tree is \emph{\lprot} if it has distance at least $\ell$ to every leaf that is a
descendant of it. Thus 2-protected = protected
and 1-protected = non-leaf (internal node).

The results above generalize immediately to \lprot{} nodes for
any fixed $\ell\ge1$.
Given a tree $T$, let $\ppl(T)$ be the proportion of nodes in $T$ that are
\lprot, and let $\pxl$ be the probability that the root of the \GWt{} $\cT$
is \lprot.
\begin{theorem}\label{Tl}
Let $\ctn$ be a \sgrt{} with $n$ nodes. Then, with notations as above, 
the following holds as \ntoo.
  \begin{romenumerate}[-10pt]
  \item (Annealed version.)
The probability $p_{n,\ell}=\E \ppl(\ctn)$ that a random node in a random tree
$\ctn$ is $\ell$-protected tends to  $\pxl$ as \ntoo, with
$\pxl$ given by the recursion
\begin{equation}\label{pl}
  \pxl:=\Phit(\pxli)-\pi_0,
\qquad \ell\ge1,
\end{equation}
with $\pxx0=1$ and $\pxx1=1-\pi_0$.

  \item ({Quenched version}.)
The proportion of nodes in $\ctn$ that are $\ell$-protected, 
\ie{} $\ppl(\ctn)$, converges in probability to $\pxl$ as \ntoo. 
  \end{romenumerate}
\end{theorem}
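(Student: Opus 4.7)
The plan is to mimic the proof of \refT{T1} verbatim, replacing the event ``root is protected'' by ``root is \lprot''. The key structural observation is that whether a node $v$ of a tree $T$ is \lprot{} depends only on the fringe subtree $T_v$, because every leaf descended from $v$ lies in $T_v$ by definition. Thus, letting $\cE_{\xp,\ell}$ denote the set of finite ordered rooted trees whose root is \lprot, a node $v$ is \lprot{} in $T$ iff $T_v \in \cE_{\xp,\ell}$, and consequently $\ppl(T) = \P(\Tx \in \cE_{\xp,\ell} \mid T)$.

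Invoking the two parts of \refT{T2} exactly as in the proof of \refT{T1} then gives
\begin{equation*}
  p_{n,\ell} = \P(\ctnx \in \cE_{\xp,\ell}) \to \P(\cT \in \cE_{\xp,\ell}) =: \pxl
\end{equation*}
for the annealed version, and
\begin{equation*}
  \ppl(\ctn) = \P(\ctnx \in \cE_{\xp,\ell} \mid \ctn) \pto \pxl
\end{equation*}
for the quenched version. The passage from point probabilities in \refT{T2} to probabilities of the (possibly infinite) set $\cE_{\xp,\ell}$ is routine: by Scheff\'e's lemma in the annealed case, and by the same extension already used implicitly in the proof of \refT{T1} in the quenched case.

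It remains to identify $\pxl$ via the recursion \eqref{pl}. I would condition on the outdegree $k$ of the root of $\cT$; by the defining branching property of a \GWt, the $k$ subtrees hanging off the root are, given $k$, independent copies of $\cT$. For $\ell\ge1$, the root is \lprot{} iff $k\ge1$ and each child, regarded as the root of its own subtree, is $(\ell-1)$-protected, since the distance from the root to any leaf descendant equals $1$ plus the distance from the relevant child to that leaf. Taking expectations,
\begin{equation*}
  \pxl = \sumki \pi_k \, \pxli^k = \Phit(\pxli) - \pi_0,
\end{equation*}
which is exactly \eqref{pl}. The base case $\pxx0=1$ encodes the tautology that every node is at distance $\ge0$ from each of its leaf descendants, and the initial value $\pxx1 = \Phit(1) - \pi_0 = 1 - \pi_0$ then follows from the recursion and coincides with the probability that the root is not a leaf.

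No step is a serious obstacle: the argument is an exact replay of the proof of \refT{T1} together with a one-line branching recursion. The only mild subtlety is the choice of base case $\pxx0=1$, which is precisely what is needed in order that the recursion \eqref{pl} reproduce the correct $\pxx1 = 1 - \pi_0$ at the first step.
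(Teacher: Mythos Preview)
Your proposal is correct and follows essentially the same approach as the paper: both invoke \refT{T2} exactly as in the proof of \refT{T1} to obtain the convergence, and both derive the recursion \eqref{pl} from the observation that the root is \lprot{} iff it has positive outdegree and each child is $(\ell-1)$-protected. Your write-up is simply a more detailed version of the paper's two-sentence proof.
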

\begin{proof}
  The convergence to $\pxl$ follows from \refT{T2} as in the proof of
  \refT{T1}. The recursion \eqref{pl} follows since the root is \lprot{} if
  and only if it has outdegree $>0$ and each child is $(\ell-1)$-protected.
\end{proof}

\begin{example}
  For uniformly random ordered trees, $\Phit(t)=1/(2-t)$, see \refE{EO},
and thus the recursion \eqref{pl} is
\begin{equation}
  \pxl=\frac1{2-\pxli}-\frac12
=\frac{\pxli}{4-2\pxli},
\qquad \ell\ge1,
\end{equation}
yielding $1/\pxl=4/\pxli-2$ with the solution $1/\pxl=(4^\ell+2)/3$, \ie{}
\begin{equation}
  \pxl=\frac{3}{4^\ell+2},
\qquad \ell\ge0.
\end{equation}
In particular, $\pxx1=1/2$, $\pxx2=1/6$, $\pxx3=1/22$, $\pxx4=1/86$.

Hence, for each fixed $\ell\ge1$, the proportion of \lprot{} nodes in a
uniform random ordered tree tends to $3/(4^\ell+2)$.
\end{example}

\begin{example}
  For uniformly random unordered labelled rooted trees we have by \refE{EU}
$\pi_0=e\qw$ and   $\Phit(t)=e^{t-1}$. Thus \eqref{pl} yields
\begin{align}
\pxx1&=1-e\qw \approx 0.63212,\\
\pxx2&=e^{-e\qw}-e\qw\approx0.32432, \\
\intertext{as in \refE{EU}, and}
  \pxx3&=\exp\lrpar{e^{-e\qw}-e\qw-1}-e\qw \approx0.14093.
\end{align}
\end{example}

%\section{Other  random trees}\label{Sother}

\section{Binary search trees}\label{SBST}
%\begin{example}[Binary search trees]\label{EBST}

A random binary search tree with $n$
nodes is a binary tree obtained by inserting, in the standard manner, 
$n$ independently and identically distributed (\iid) uniform
$[0, 1]$  random variables $X_1,\dots,X_n$
%the values  $\gs_1,\dots,\gs_n$ of a random permutation of \set{1, \dots, n} 
into an initially empty tree, see \eg{} \cite{Drmota}.
Let $\ctn$ be a random binary search tree with $n$ nodes. 
\citet{AldousFringe} % section 3.3
showed that there exists a random limiting  fringe tree  $\fT$ in this case
too such that \eqref{t2a} and \eqref{t2q} hold (with $\cT$ replaced by
$\fT$); in fact, the convergence in \eqref{t2q} holds a.s.
The limit tree $\fT$ can be described as a binary search tree $\cT_N$ with a
random size $N$; this is easily seen by the recursive construction of the
binary search tree, letting $N$ be the limiting distribution of the subtree
size $|\ctnx|$ of a random node, and a calculation shows that
$\P(N=n)=2/(n+1)(n+2)$, $n\ge1$ \cite{AldousFringe}.
See also \citet{Devroye02} for a simple direct proof.

Moreover, \citet{AldousFringe} also shows that $\fT$ may be constructed as
follows:
Let $\ctt_t$, $t\ge0$, be a random process of a binary tree
growing in continuous time, starting with $\ctt_0$ being a single root, and
adding left and right children with intensity 1 at all possible places. In
other words, given any $\ctt_t$ at a time $t\ge0$, any possible child of an
existing node (excluding children already existing) is added after an
exponential $\Exp(1)$ waiting time; all waiting times being independent.
It is well-known and easy to see that at any fixed time $t>0$, the
conditional distribution of $\ctt_t$ given $|\ctt_t|=n$ equals the
distribution of $\ctn$. Moreover, if we instead take $\ctt_X$ at a random
time $X\sim\Exp(1)$ (independent of everything else), then $\ctt_X\eqd\fT$.

We can now repeat the proof of \refT{T1} and obtain the same results as
above, with
\begin{equation}\label{pbin}
  \begin{split}
\px=\P(\text{the root of $\fT$ is protected})	
%=\P(\fT\in\cep)
=\P(\ctt_X\in\cep)
=\intoo\P(\ctt_t\in\cep)e^{-t}\dd t
  \end{split}
\end{equation}
In order to evaluate $\px$, 
we consider first $\ctt_t$ for a given $t$. The probability, $q_1(t)$ say, 
that the root of
$\ctt_t$ is a leaf is $e^{-2t}$. Similarly, 
if the left child appears at time $s$, then
the probability that it still is a leaf at some later time $t>s$ is
$e^{-2(t-s)}$.  Hence, the probability, $r_1(t)$ say, 
that there is a left child that is a
leaf is 
\begin{equation}
  r_1(t):=
\int_0^t e^{-2(t-s)}e^{-s}\dd s
=\int_0^t e^{s-2t}\dd s = e^{-t}-e^{-2t}.
\end{equation}
The probability that the root has at least one child that is a leaf is thus,
by symmetry and independence,
$1-(1-r_1(t))^2= 2r_1(t)-r_1(t)^2 $ and the probability that the root in
$\ctt_t$ is not protected is
\begin{equation}
  \begin{split}
q_1(t)+ 2r_1(t)-r_1(t)^2
&=
e^{-2t}+2e^{-t}-2e^{-2t} -(e^{-t}-e^{-2t} )^2
\\&
=2e^{-t}-2e^{-2t} +2e^{-3t}-e^{-4t}.
  \end{split}
\end{equation}
Hence we obtain from \eqref{pbin}
\begin{equation}
  \px=1-\intoo \bigpar{2e^{-t}-2e^{-2t} +2e^{-3t}-e^{-4t}}e^{-t}\dd t
=\frac{11}{30},
\end{equation}
in accordance with \citet{MahmoudWard-bst} and \citet{Bona}.

More generally, let $q_\ell(t)$ be the probability that the root of $\ctt_t$ 
is \emph{not} \lprot, $\ell\ge1$, and let $r_\ell(t)$ be the probability
that the root in $\ctt_t$ has a left child that is not \lprot.
The same argument as above yields the recursion, for $\ell\ge2$,
\begin{align}
  q_\ell(t)&=q_1(t)+2r_{\ell-1}(t)-r_{\ell-1}(t)^2,\\
  r_{\ell-1}(t)&=\intot q_{\ell-1}(t-s)e^{-s}\dd s
=e^{-t}\intot q_{\ell-1}(s)e^{s}\dd s,
\end{align}
and then the asymptotic proportion of \lprot{} nodes is found as
\begin{equation}
  \pxl=1-\intoo q_\ell(t)e^{-t}\dd t.
\end{equation}
A Maple calculation yields $\pxx1=2/3$, $\pxx2=11/30$,
$\pxx3={\xfrac {1249}{8100}}$,   % 0.1541975309
$\pxx4={\xfrac {103365591157608217}{2294809143026400000}}
\approx 0.04504$, %321916
in agreement with \citet{Bona}, who calculates these values by a different
method. 

\begin{remark}
\citet{Bona} considers $c_\ell$, the asymptotic probability that a
random node is at \emph{level} $\ell$, meaning that the distance to the
nearest leaf that is a descendant
is $\ell-1$; thus a node is $\ell$-protected if it is at a
level strictly larger than $\ell$, and $c_\ell=\pxx{\ell-1}-\pxx\ell$, with
$\pxx0=1$.  
\end{remark}

In the quenched version,
asymptotic normality of the number of protected nodes
was shown by  \citet{MahmoudWard-bst}. 
Alternatively, this follows easily by the method of \citet{Devroye02},
see \cite{HolmgrenJanson} for details.

\section{Random recursive trees} \label{SRRT}
A uniform random recursive tree (URRT) $\ctn$ of order $n$
 is a tree with $n$ nodes labeled \set{1, \dots , n}. The root is labelled
1, and for $2 \le i \le n$, the node labelled $i$ chooses a vertex in 
\set{1, \dots, i-1} uniformly at random as its parent.
See \eg{} \cite{DevroyeFF}, \cite{Drmota}, \cite{SmytheMahmoud}. 
   This case is very similar to the random binary search tree in \refS{SBST}:
\citet{AldousFringe} has shown the existence of a random limiting fringe tree
$\fT$, and again $\fT$ can be described as $\cT_N$, now with
$\P(N=n)=1/n(n+1)$.
Moreover, $\fT$ can be constructed as $\ctt_X$ with $X\sim\Exp(1)$ in this
case too, where now $\ctt_t$ is the random tree process where each node gets
a new child with i.i.d.\ exponential waiting times with intensity 1.
(The Yule tree process.)

The children of the root arrive in a Poisson process with intensity 1; hence
the number of children of the root in $\ctt_t$ has the distribution
$\Po(t)$,
and the probability that the root is a leaf is 
$\P(\Po(t)=0)=e^{-t}$.
Moreover, a child that is born at time $s$ is still a leaf at time $t>s$
with probability $e^{-(t-s)}$. Hence children of the root that remain leaves
at time $t$ are born with intensity $e^{-(t-s)}$, $s\in(0,t)$, and since a
  thinning of a Poisson process is a Poisson process, it follows that the
  number of children of the root that are leaves at time $t$ has a Poisson
  distribution with expectation
$\intot e^{-(t-s)}\dd s=1-e^{-t}$. Consequently, the probability that the
  root of $\ctt_t$ has no child that is a leaf is
$\exp(-(1-e^{-t}))$. Subtracting the probability that the root has no child
  at all, we obtain the probability $p_2(t)$ that the root of $\ctt_t$ is
  protected as
  \begin{equation}
	p_2(t)=\exp\bigpar{e^{-t}-1}-e^{-t}
  \end{equation}
and thus
  \begin{equation}
	\begin{split}
\px&=\intoo	p_2(t) e^{-t}\dd t
=\intoo \exp\bigpar{e^{-t}-1}e^{-t}\dd t- \intoo e^{-2t}\dd t
\\&
=\intoi \exp(x-1)\dd x-\frac12
%=1-e^{-1}-\frac12
=
\frac12-e^{-1},	  
	\end{split}
  \end{equation}
in accordance with \citet{MahmoudWard-rrt}.
%(\citet{MahmoudWard-rrt} also give a bound on the variance.)

We can treat \lprot{} nodes too in random recursive trees by the same
method. If $p_\ell(t)$ is the probability that the root is \lprot{} in
$\ctt_t$, and $q_\ell(t)=1-p_\ell(t)$, then the number of children of the
root that are not $(\ell-1)$-protected at time $t$ is Poisson distributed
with mean $\intot q_{\ell-1}(t-s)\dd s=\intot q_{\ell-1}(s)\dd s$, yielding
the recursion, for $\ell\ge1$,
\begin{equation}
  \begin{split}
p_\ell(t)=\exp\lrpar{-\intot q_{\ell-1}(s)\dd s}-\exp(-t)	
=
e^{-t}\lrpar{ \exp\lrpar{\intot p_{\ell-1}(s)\dd s}-1},
  \end{split}
\end{equation}
with $p_0(t)=1$ and $p_1(t)=1-e^{-t}$. 
In principle, $\pxl$ can be computed as $\intoo p_\ell(t)e^{-t}\dd t$,
but in this case we do not know any closed form for $\ell>2$.

%Variance?

%Asymptotic  normality?

\newcommand\AAP{\emph{Adv. Appl. Probab.} }
\newcommand\JAP{\emph{J. Appl. Probab.} }
\newcommand\JAMS{\emph{J. \AMS} }
\newcommand\MAMS{\emph{Memoirs \AMS} }
\newcommand\PAMS{\emph{Proc. \AMS} }
\newcommand\TAMS{\emph{Trans. \AMS} }
\newcommand\AnnMS{\emph{Ann. Math. Statist.} }
\newcommand\AnnPr{\emph{Ann. Probab.} }
\newcommand\CPC{\emph{Combin. Probab. Comput.} }
\newcommand\JMAA{\emph{J. Math. Anal. Appl.} }
\newcommand\RSA{\emph{Random Struct. Alg.} }
\newcommand\ZW{\emph{Z. Wahrsch. Verw. Gebiete} }
\newcommand\DMTCS{\jour{Discr. Math. Theor. Comput. Sci.} }

\newcommand\AMS{Amer. Math. Soc.}
\newcommand\Springer{Springer-Verlag}
\newcommand\Wiley{Wiley}

\newcommand\vol{\textbf}
\newcommand\jour{\emph}
\newcommand\book{\emph}
\newcommand\inbook{\emph}
\def\no#1#2,{\unskip#2, no. #1,} %(typeset after year) 
\newcommand\toappear{\unskip, to appear}

\newcommand\urlsvante{\url{http://www.math.uu.se/~svante/papers/}}
\newcommand\arxiv[1]{\url{arXiv:#1.}}
\newcommand\arXiv{\arxiv}

\def\nobibitem#1\par{}

\end{document}